\newtheorem{thm}{Theorem}
\newtheorem*{que*}{Question}
\newtheorem{prop}[thm]{Proposition}
\newtheorem{lem}[thm]{Lemma}
\newtheorem{example}[thm]{Example}
\newtheorem{remark}[thm]{Remark}
\begin{document}

\title[Bi-Perron numbers with real or unimodular conjugates]{The geometry of bi-Perron numbers with\\ real 
or unimodular Galois conjugates}

\author{Livio Liechti}
\address{Department of Mathematics\\
University of Fribourg\\
Chemin du Mus\'ee 23\\
1700 Fribourg\\Switzerland}
\email{livio.liechti@unifr.ch}

\author{Joshua Pankau}
\address{Department of Mathematics\\ University of Iowa \\ 14 MacLean Hall\\ Iowa City, Iowa 52242-1419}
\email{joshua-pankau@uiowa.edu}

\begin{abstract} 
Among all bi-Perron numbers, we characterise those all of whose Galois conjugates are 
real or unimodular as the ones that admit a power which is the stretch 
factor of a pseudo-Anosov homeomorphism arising from Thurston's construction.
This is in turn equivalent to admitting a power which is the spectral radius of a bipartite 
Coxeter transformation.  
\end{abstract}

\maketitle

\section{Introduction}

\noindent
Particular geometric situations often give rise to particular algebraic numbers, 
and it is a natural question to characterise these numbers by their geometry.
In this note, we provide a 
description of those bi-Perron numbers all of whose Galois conjugates are real or unimodular. 
We relate those numbers to pseudo-Anosov stretch factors arising via Thurston's construction, 
and to the spectral radii of bipartite Coxeter transformations.  
\medskip 

\noindent
A {bi-Perron} number~$\lambda$ is a real algebraic unit~$>1$ all of whose 
Galois conjugates have modulus in the open interval~$(\lambda^{-1}, \lambda)$, 
except for~$\lambda$ itself and possibly one of~$\pm\lambda^{-1}$.
Stretch factors of pseudo-Anosov homeomorphisms are prominent examples of bi-Perron numbers. 
Conversely, the following problem was posed by Fried~\cite{Fried85}: does every bi-Perron number have 
a power\footnote{Fried's problem is sometimes cited in a stronger form that does not allow powers. 
While powers are needed in the setting of our Theorem~\ref{bi-Perron_characterisation}, 
indeed they might not be necessary if we allow all pseudo-Anosov homeomorphisms instead of just the ones arising from Thurston's construction.
However, even the version with powers suffices to ensure that every bi-Perron number arises as the 
growth rate of a surface homeomorphism.} that arises as the stretch factor of a pseudo-Anosov homeomorphism?
Recently, the second author gave a positive answer to Fried's problem for
the class of Salem numbers~\cite{Pankau}, 
that is, bi-Perron numbers with all other Galois conjugates of modulus at most one, 
and with at least one Galois conjugate of modulus exactly one. Our main result extends this positive answer to the class of 
all bi-Perron numbers with real or unimodular Galois conjugates. This condition on the Galois conjugates 
turns out to precisely characterise the pseudo-Anosov stretch factors that arise from Thurston's construction, 
and the spectral radii of bipartite Coxeter transformations.

\begin{thm}
\label{bi-Perron_characterisation}
For a bi-Perron number~$\lambda$, the following are equivalent.
\begin{enumerate}
\item[(a)] All Galois conjugates of~$\lambda$ are contained in~$\mathbf{S}^1\cup\mathbf{R}$.
\item[(b)] For some positive integer~$k$,~$\lambda^k$ is the stretch factor of a 
pseudo-Anosov homeomorphism arising from Thurston's construction.
\item[(c)] For some positive integer~$k$,~$\lambda^k$ is the spectral radius of a 
bipartite Coxeter transformation of a bipartite Coxeter diagram with simple edges.

\end{enumerate}
\end{thm}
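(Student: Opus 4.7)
My plan is to prove the equivalences cyclically: establish (b)$\Leftrightarrow$(c) as a dictionary between Thurston's construction and bipartite Coxeter theory, deduce (c)$\Rightarrow$(a) from the known structure of bipartite Coxeter spectra, and prove (a)$\Rightarrow$(c) as a realisation result that carries the main weight of the theorem.

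For (b)$\Leftrightarrow$(c), I would start from a pair of filling multicurves $(\alpha,\beta)$ in minimal position on a surface such that any two components from opposite sides meet at most once, and attach the bipartite simple graph $G$ with vertex set $V_\alpha\sqcup V_\beta$ and one edge per intersection point. A Thurston pseudo-Anosov built from these data acts on the lattice spanned by the components of $\alpha\cup\beta$, equipped with the geometric intersection pairing, as a word in the two partial involutions $s_\alpha$ and $s_\beta$ of the bipartite Coxeter system of $G$. Its stretch factor equals the Perron--Frobenius eigenvalue of a conjugate of the corresponding Coxeter matrix, and every bipartite simply-laced Coxeter diagram is realised by such a multicurve pair on an appropriate surface. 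This matches (b) and (c) up to taking powers.

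The direction (c)$\Rightarrow$(a) is then mostly classical. For a bipartite simply-laced diagram with adjacency matrix $A=\begin{pmatrix}0&B\\B^{T}&0\end{pmatrix}$, A'Campo's factorisation writes the Coxeter polynomial as $\prod_i\bigl(t^{2}-(\sigma_i^{2}-2)\,t+1\bigr)$, possibly times cyclotomic factors, where the $\sigma_i\ge 0$ are the singular values of $B$. Each quadratic factor has roots that are either real and reciprocal (when $\sigma_i\ge 2$) or unimodular complex conjugate (when $\sigma_i<2$), so every eigenvalue of the Coxeter element lies in $\mathbf{S}^1\cup\mathbf{R}$. Hence all Galois conjugates of $\lambda^{k}$ lie in $\mathbf{S}^1\cup\mathbf{R}$; a short case analysis, using the bi-Perron hypothesis on $\lambda$ to rule out conjugates of the form $r\zeta$ with $r\neq 1$ real and $\zeta$ a non-real root of unity, upgrades this to all conjugates of $\lambda$ itself.

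The direction (a)$\Rightarrow$(c) is the main step. Given bi-Perron $\lambda$ with conjugates in $\mathbf{S}^1\cup\mathbf{R}$, I would choose $k$ even and large enough that every real Galois conjugate of $\mu=\lambda^{k}$ is non-negative, and reduce to realising $\nu=\sqrt{\mu}+1/\sqrt{\mu}$ (or some further power of $\nu$) as the Perron--Frobenius eigenvalue of the adjacency matrix of a bipartite simple graph. By construction $\nu$ is a totally real algebraic integer all of whose conjugates lie in $[-\nu,\nu]$, with those coming from unimodular conjugates of $\mu$ falling in $[-2,2]$. The main obstacle is then the combinatorial realisation: producing a bipartite simple graph whose adjacency spectrum matches, up to cyclotomic factors, the polynomial of $\nu$. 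I would build on Pankau's graph construction in the Salem case, in which a tree is assembled along a central vertex, and extend it to accommodate the additional real conjugates by enlarging the tree and passing to a further power of $\lambda$ whenever integrality requires.
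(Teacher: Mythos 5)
Your overall plan shares the paper's skeleton (the paper proves (a)$\Rightarrow$(c)$\Rightarrow$(b)$\Rightarrow$(a)), and two of your three pieces are sound. Your (c)$\Rightarrow$(a) is in fact a slight simplification of what the paper does: you invoke A'Campo directly to place all conjugates of $\lambda^k$ in $\mathbf{S}^1\cup\mathbf{R}$, and the ``short case analysis'' ruling out conjugates $r\zeta$ with $r\neq 1$ and $\zeta$ a non-real root of unity really is short once one recalls the paper's Lemma~\ref{Perronpowers}: the map $\mu\mapsto\mu^k$ is a bijection on conjugates of a Perron number, so $\mu^k=\overline{\mu}^k$ forces $\mu=\overline{\mu}$. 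The paper instead routes (b)$\Rightarrow$(a) through Hubert--Lanneau's total reality of the trace field and its Proposition~\ref{Perrontraces}; your version avoids that machinery, which is a genuine (if modest) economy. Your (b)$\Leftrightarrow$(c) matches the paper's Lemma~\ref{Thurston_lemma} in spirit.

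The gap is in (a)$\Rightarrow$(c), which you correctly identify as carrying the main weight. You propose to realise $\nu=\lambda^{k/2}+\lambda^{-k/2}$ as the \emph{spectral radius} of a bipartite simple graph by ``building on Pankau's graph construction, in which a tree is assembled along a central vertex,'' but this misdescribes the mechanism, and the tree-based picture you sketch cannot close the gap. Salez-type realisation gives $\nu$ as \emph{an} eigenvalue of a tree, not as its Perron--Frobenius eigenvalue; the extra spectrum of the tree may well exceed $\nu$, and there is no evident way to control it by ``enlarging the tree and passing to a further power.'' What the paper (following \cite{Pankau}) actually uses is: (i)~Estes' theorem, producing a \emph{rational symmetric} matrix $Q$ whose characteristic polynomial is $f(t)(t-1)^e$, where $f$ is the minimal polynomial of $\lambda+\lambda^{-1}$ (or of $\lambda^2+\lambda^{-2}$ when $-\lambda^{-1}$ is a conjugate); (ii)~a conjugation making the Perron eigenvector of $Q$ strictly positive; (iii)~the symplectic companion $\mathcal{M}=\begin{pmatrix}Q&-I\\ I&0\end{pmatrix}$, whose powers $\mathcal{M}^k$ become \emph{integral} for large $k$ and whose blocks of $\mathcal{M}^k+\mathcal{M}^{-k}$ are symmetric with spectral radius $\lambda^k+\lambda^{-k}$; (iv)~the bi-Perron hypothesis, which makes the spectral-gap ratio $(\lambda^k+\lambda^{-k})/(\lambda_2^k+\lambda_2^{-k})$ blow up so that those blocks become entrywise positive; and (v)~Hoffman's reduction from a positive symmetric integer matrix to the adjacency matrix of a bipartite simple graph with the same spectral radius. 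None of steps (i)--(v) is a tree construction, and in particular (iii)--(iv) are exactly where the bi-Perron hypothesis does its work; your proposal has no substitute for them, so as written the realisation step does not go through.
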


\noindent
This result is of optimal quality. Indeed, the smallest stretch factor of a pseudo-Anosov homeomorphism 
arising from Thurston's construction as well as the smallest spectral radius~$>1$ of a Coxeter 
transformation are both equal to Lehmer's number~$\lambda_L 
\approx 1.17628$ by work of Leininger~\cite{Leininger} and McMullen~\cite{McMullen}, respectively.
On the other hand, no such lower bound exists for bi-Perron numbers all of whose Galois conjugates are 
contained in~$\mathbf{S}^1\cup\mathbf{R}$. This is the content of the following proposition.

\begin{prop}
There exist bi-Perron numbers arbitrarily close to~$1$ and all of whose Galois conjugates are 
contained in~$\mathbf{S}^1\cup\mathbf{R}_{>0}$.
\end{prop}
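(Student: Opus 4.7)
The plan is to reduce, via the substitution $\beta=\lambda+\lambda^{-1}$, to producing a sequence of totally real algebraic integers $\beta_n>2$ with $\beta_n\to 2$ and with every Galois conjugate of $\beta_n$ lying in $(-2,\beta_n]$, at least one of them strictly in $(-2,2)$. Given such $\beta_n$, set $\lambda_n=(\beta_n+\sqrt{\beta_n^2-4})/2$; then $\lambda_n-1$ is of order $\sqrt{\beta_n-2}$, so $\lambda_n\to 1$. Since $\mathbf{Q}(\beta_n)$ is totally real, the existence of a Galois conjugate of $\beta_n$ in $(-2,2)$ forces $\beta_n^2-4$ not to be a square in $\mathbf{Q}(\beta_n)$---otherwise its square root would be a totally real algebraic integer with a Galois conjugate of negative square. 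Hence the polynomial $x^{\deg q_n}q_n(x+x^{-1})$, with $q_n$ the minimal polynomial of $\beta_n$, is irreducible and equal to the minimal polynomial of $\lambda_n$; its roots correspond bijectively to the Galois conjugates $\beta'$ of $\beta_n$ via $\lambda'+(\lambda')^{-1}=\beta'$, forming a non-real unimodular pair when $\beta'\in[-2,2)$ and a reciprocal pair in $\mathbf{R}_{>0}$ when $\beta'\in(2,\beta_n]$. So $\lambda_n$ is bi-Perron with all Galois conjugates in $\mathbf{S}^1\cup\mathbf{R}_{>0}$.

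To supply such $\beta_n$, I would use the Perron eigenvalue of the adjacency matrix of a non-bipartite connected graph $G_n$, chosen so that its spectral radius tends to $2$ from above while the rest of the spectrum remains in $(-2,2]$. A convenient concrete family is a long odd cycle $C_{2m_n+1}$ with a small fixed non-bipartite subgraph (a triangle, say) attached at one vertex, with $m_n\to\infty$. Symmetry and integrality of the adjacency matrix make $\beta_n$ a totally real algebraic integer, non-bipartiteness of $G_n$ prevents $-\beta_n$ from appearing in the spectrum, and eigenvalue interlacing with the cycle together with Perron--Frobenius estimates give $\beta_n=2+O(1/m_n^2)$ with the remaining eigenvalues controlled. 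Passing to the Galois orbit of $\beta_n$, by discarding any rational eigenvalues that split off as cyclotomic factors of the characteristic polynomial, yields the minimal polynomial $q_n$ with the required root distribution.

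The main obstacle is to choose the attached subgraph so that the minimum eigenvalue of $G_n$ remains uniformly bounded strictly above $-2$: since the minimum eigenvalue of $C_{2m+1}$ itself approaches $-2$ as $m\to\infty$, the attached piece must be arranged with care (a single pendant, for example, might drag the minimum eigenvalue below $-2$ for large $m$). Once this is in place, combined with the reduction above, one obtains the desired bi-Perron numbers $\lambda_n\to 1$ with all Galois conjugates in $\mathbf{S}^1\cup\mathbf{R}_{>0}$.
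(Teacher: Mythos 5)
Your reduction via $\beta = \lambda + \lambda^{-1}$ is the same core idea as the paper's proof: produce totally real algebraic integers $\beta_n > 2$ with $\beta_n \to 2$ and all conjugates in $(-2,\beta_n]$, then pass to the largest root of $x^{\deg q_n}\,q_n(x+x^{-1})$. Where you part ways with the paper is in how the $\beta_n$ are produced, and there the proposal has a genuine gap --- and it is not the obstacle you flagged.

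The paper obtains the $\beta_n$ directly from Robinson's theorem (there are infinitely many algebraic integers whose entire Galois orbit lies in the \emph{shifted} interval $[-2+\varepsilon,\,2+2\varepsilon]$ of length $>4$) together with the P\'olya/Schur finiteness theorem (only finitely many such orbits fit in $[-2+\varepsilon,\,2]$, so infinitely many of Robinson's numbers exceed $2$). The crucial point is the shift: the orbit is kept away from $-2$ \emph{by construction}, with no separate argument needed.

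Your substitute construction via graph spectra cannot deliver this. You claim that attaching a fixed non-bipartite piece (a triangle) at one vertex of a long odd cycle $C_{2m+1}$ gives $\beta_m := \lambda_{\max}(G_m) = 2 + O(1/m^2)$. This is false. Attaching a bounded defect at a single vertex of a long cycle creates a Perron eigenvector that is \emph{localised} near the defect (a bound state), and $\lambda_{\max}(G_m)$ therefore converges, as $m\to\infty$, to a fixed number $\lambda^{\ast}>2$ determined by the defect alone, not to $2$. The $O(1/m^2)$ rate you quote is the behaviour of the unperturbed cycle, whose Perron eigenvector is delocalised; it disappears the moment you add an edge. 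More decisively, there is a classification obstruction: by Brouwer--Neumaier, every connected graph whose spectral radius lies strictly between $2$ and $\sqrt{2+\sqrt{5}}\approx 2.058$ is a tree. Connected non-bipartite graphs therefore have spectral radius either exactly $2$ (odd cycles) or at least $\sqrt{2+\sqrt{5}}$. There is no sequence of connected non-bipartite graphs with $\lambda_{\max}\to 2^{+}$, so no choice of attached piece can repair the construction.

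For completeness, the difficulty you did flag --- keeping $\lambda_{\min}(G_n)$ uniformly bounded above $-2$ --- is also unattainable, and for a reason the paper's own tools make transparent: if the orbit of $\beta_n$ lay in $[-2+\varepsilon_0,\,2+\delta_n]$ with $\varepsilon_0>0$ fixed and $\delta_n\to 0$, then for large $n$ this interval has length $<4$, and by the same P\'olya/Schur theorem only finitely many Galois orbits fit inside it, contradicting $\beta_n\to 2$ through infinitely many values. Any valid family must have $\lambda_{\min}\to -2^{+}$ as well as $\lambda_{\max}\to 2^{+}$, with a controlled asymmetry; Robinson's Chebyshev-polynomial argument produces exactly such families, and that is what the proof requires.
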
 

\noindent
The proof is short and we choose to give it here.

\begin{proof}
Choose any~$\varepsilon>0$. 
By Robinson's work on Chebyshev polynomials~\cite{Robinson}, there exist infinitely many algebraic integers 
that lie, together with all their Galois conjugates, in the interval~$[-2+\varepsilon, 2+2\varepsilon]$. On the 
other hand, by a result due to P\'olya described in Schur~\cite{Schur}, only finitely many algebraic integers 
lie, together with all their Galois conjugates, in the interval~$[-2+\varepsilon, 2]$. It follows that in the 
interval~$(2,2+2\varepsilon]$, there exist infinitely many Perron numbers all of whose Galois conjugates are 
contained in the interval~$[-2+\varepsilon, 2+2\varepsilon]$. Let~$p(t)$ be the minimal polynomial of such a 
Perron number and define the polynomial~$f(t) = t^{\mathrm{deg}(p)}p(t+t^{-1})$. Then every 
root~$x$ of~$f(t)$ is related to some root~$y$ of~$p(t)$ by~$x+x^{-1} = y$ and vice versa. 
In particular, all the roots of~$f(t)$ are contained in~$\mathbf{S}^1\cup\mathbf{R}_{>0}$. Furthermore, 
if~$2<y<2+2\varepsilon$, then~$1<x<1+\varepsilon+\sqrt{2\varepsilon+\varepsilon^2}$, assuming without loss of generality that~$x>x^{-1}$. 
Now, let~$x_0$ be the maximal real root of~$f(t)$. 
By construction, no other root of~$f(t)$ is as small as~$x_0^{-1}$ in modulus, so~$x_0$ is a bi-Perron number 
all of whose Galois conjugates are contained in~$\mathbf{S}^1\cup\mathbf{R}_{>0}$.  
Choosing~$\varepsilon$ arbitrarily small yields the desired result.
\end{proof}

\noindent
For the Galois conjugates of a bi-Perron number, we have the following result; the statement is 
different from the one of Theorem~\ref{bi-Perron_characterisation} in that 
we only have to use squares for the characterisation, and we only need Coxeter diagrams 
that are trees. 

\begin{thm}
\label{conjugates_characterisation}
For a Galois conjugate~$\lambda$ of a bi-Perron number, the following are equivalent.
\begin{enumerate}
\item[(a)] All Galois conjugates of~$\lambda$ are contained in~$\mathbf{S}^1\cup\mathbf{R}$.
\item[(b)] The number $\lambda^2$ is an eigenvalue of a Coxeter transformation associated 
with a tree.
\end{enumerate}
\end{thm}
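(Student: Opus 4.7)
The plan is to reduce both implications to the standard relation between eigenvalues of a bipartite Coxeter transformation with simple edges and eigenvalues of its adjacency matrix~$A$: each eigenvalue~$\mu$ of the Coxeter transformation satisfies $\mu + \mu^{-1} + 2 = \alpha^2$ for some eigenvalue~$\alpha$ of~$A$, and conversely every eigenvalue~$\alpha$ of~$A$ produces a pair $\{\mu, \mu^{-1}\}$ of Coxeter eigenvalues through the same formula. This relation is already in play in the proof of Theorem~\ref{bi-Perron_characterisation}; what makes the statement for trees particularly transparent is that adjacency eigenvalues of trees are automatically real.

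For (b)$\Rightarrow$(a), I would start from the assumption that $\lambda^2$ is an eigenvalue of the Coxeter transformation~$c$ of a tree~$T$. Since the characteristic polynomial of~$c$ has integer coefficients, every Galois conjugate~$\mu$ of~$\lambda^2$ is also an eigenvalue of~$c$. The relation above then forces $\mu + \mu^{-1} = \alpha^2 - 2 \geq -2$, confining~$\mu$ to $\mathbf{R}_{>0} \cup \mathbf{S}^1$. Writing~$\mu$ as the square of the corresponding Galois conjugate~$\lambda'$ of~$\lambda$, I would conclude that~$\lambda'$ itself lies in $\mathbf{R} \cup \mathbf{S}^1$: a positive real~$\mu$ rules out purely imaginary~$\lambda'$, and a unimodular~$\mu$ forces $|\lambda'|=1$. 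The bi-Perron hypothesis is not actually needed for this direction.

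For (a)$\Rightarrow$(b), the strategy is to set $\alpha := \lambda + \lambda^{-1}$ and realise~$\alpha$ as an eigenvalue of the adjacency matrix of a tree. Because~$\lambda$ is a Galois conjugate of a bi-Perron number, it is an algebraic unit, so $\lambda^{-1}$ is an algebraic integer and hence so is~$\alpha$. Every Galois conjugate of~$\alpha$ has the form $\lambda' + (\lambda')^{-1}$ for some Galois conjugate $\lambda' \in \mathbf{R} \cup \mathbf{S}^1$ of~$\lambda$, and such a quantity is manifestly real (it is either a real sum or $2\operatorname{Re}(\lambda')$). Thus~$\alpha$ is a totally real algebraic integer. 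At this point I would invoke the theorem, due to Estes and subsequently sharpened to trees, that every totally real algebraic integer is an eigenvalue of the adjacency matrix of some tree~$T$. The identity $(\lambda + \lambda^{-1})^2 = \lambda^2 + \lambda^{-2} + 2$ then realises~$\lambda^2$ as an eigenvalue of the Coxeter transformation of~$T$.

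The main obstacle is the realisation step: Estes's original theorem provides only a symmetric integer matrix, whereas the statement here demands the adjacency matrix of a tree. Pulling the correct sharpening from the literature, or else constructing~$T$ explicitly from the minimal polynomial of~$\alpha$, is the substantive content of the argument. The remainder reduces to the algebraic identities between Coxeter and adjacency eigenvalues and to the bookkeeping that distinguishes real, purely imaginary, and unimodular Galois conjugates.
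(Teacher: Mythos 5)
Your proposal is correct and follows essentially the same path as the paper. For (a)$\Rightarrow$(b) the paper does exactly what you outline: note that $\lambda+\lambda^{-1}$ is a totally real algebraic integer, invoke the tree realisation theorem, then use the $\alpha^2-2=\mu+\mu^{-1}$ relation. The reference you are hedging on is Salez (\emph{Every totally real algebraic integer is a tree eigenvalue}, 2015); Estes only gives a symmetric integer matrix, which would not suffice here, and the paper cites Salez precisely because the tree structure is needed. For (b)$\Rightarrow$(a) the paper instead cites A'Campo's theorem that Coxeter transformations of trees have spectrum in $\mathbf{S}^1\cup\mathbf{R}_{>0}$, then appeals to the Perron-power lemma to identify the Galois conjugates of $\lambda^2$ with the squares of the conjugates of $\lambda$. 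Your variant re-derives the A'Campo-type confinement directly from the spectral relation and the positivity of $\alpha^2$, which is legitimate and self-contained. Your further remark that the bi-Perron hypothesis is dispensable in this direction is accurate and a mild simplification: one only needs that $\sigma(\lambda)^2=\sigma(\lambda^2)$ is a Galois conjugate of $\lambda^2$, which holds unconditionally, rather than the full statement that the conjugates of $\lambda^2$ are exactly the squares of the conjugates of $\lambda$.
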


\noindent
We note that the bi-Perron number in the statement might not be the spectral radius of 
the Coxeter transformation. Furthermore, we do not include a statement concerning stretch factors, 
since in the setting of Thurston's construction we cannot assure that~$\lambda$ is actually a Galois 
conjugate of a stretch factor, but only an eigenvalue of the action induced on the 
first homology of the surface by a pseudo-Anosov homeomorphism. 
\medskip

\noindent
Again, no result of the generality of Theorem~\ref{conjugates_characterisation} can be obtained 
without taking squares: by a result of A'Campo~\cite{A'Campo}, a Coxeter transformation associated 
with a tree has no negative real eigenvalue, except for possibly~$-1$. 
\medskip

\noindent
\textsc{Organisation}.
In the next section, we discuss some Galois-theoretic properties of Perron and bi-Perron numbers and prove a result related to trace fields of Perron numbers.
We also recall a result of the second author~\cite{Pankau} that is key for our purposes. 
In Section~\ref{Coxeter_pA_sec}, 
we describe the key input of Coxeter transformations and Thurston's construction of pseudo-Anosov 
homeomorphisms, and we prove Theorem~\ref{bi-Perron_characterisation} and 
Theorem~\ref{conjugates_characterisation}.
\medskip

\noindent
\textsc{Acknowledgements}. 
The first author would like to thank Ruth Kellerhals for inspiring discussions on the subject of bi-Perron numbers. 
We would also like to thank the anonymous referees for their helpful comments on previous versions of this article.

\section{Perron and bi-Perron numbers}
\label{Perron_sec}

\noindent
In this section, we prove a result about the trace fields of Perron and bi-Perron numbers.
A {Perron number}~$\lambda$ is a real algebraic integer~$>1$ all of whose Galois conjugates have modulus 
in the open interval~$(0, \lambda)$, except for~$\lambda$ itself.
The following statement is given in the proof of Lemma~8.2 of Strenner~\cite{Strenner}. 

\begin{lem}
\label{Perronpowers}
Let~$\lambda$ be a Perron number of degree~$l$, and let~$\lambda_1,\dots, \lambda_l$ be its Galois 
conjugates. Then for all positive integers~$k$,~$\lambda^k$ is also of degree~$l$ 
and~$\lambda_1^k,\dots,\lambda_l^k$ are its Galois conjugates. 
\end{lem}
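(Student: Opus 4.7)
The plan is to argue in two steps: first reduce the claim to showing the $l$ values $\lambda_1^k,\dots,\lambda_l^k$ are pairwise distinct, and then use the Perron property to rule out any collision.

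For the reduction, I would note that any Galois automorphism $\sigma$ of the Galois closure of $\mathbf{Q}(\lambda)$ over $\mathbf{Q}$ satisfies $\sigma(\lambda^k)=\sigma(\lambda)^k$, so the Galois conjugates of $\lambda^k$ form a subset of $\{\lambda_1^k,\dots,\lambda_l^k\}$. In particular $\deg(\lambda^k)\le l$, with equality if and only if these $l$ values are distinct, in which case they are exactly the conjugates of $\lambda^k$. So everything reduces to showing no two of the $\lambda_i^k$ coincide.

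For the core step, I would argue by contradiction. Suppose $\lambda_i^k=\lambda_j^k$ with $i\ne j$. Then $\deg(\lambda^k)<l$, so $\mathbf{Q}(\lambda^k)$ is a proper subfield of $\mathbf{Q}(\lambda)$, and passing to the Galois closure I can pick an automorphism $\sigma$ that fixes $\lambda^k$ but moves $\lambda$. Then $\sigma(\lambda)/\lambda$ is a nontrivial $k$-th root of unity, hence lies on the unit circle, so $|\sigma(\lambda)|=|\lambda|=\lambda$. But $\sigma(\lambda)$ is a Galois conjugate of $\lambda$ different from $\lambda$ itself, directly contradicting the defining property that $\lambda$ is the unique Galois conjugate of maximum modulus. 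This closes the argument.

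I do not expect a real obstacle here: the statement is essentially a packaging of the observation that the Perron maximum-modulus condition forbids two conjugates from differing by a root of unity. The only thing to watch is not to be sloppy about the distinction between ``Galois conjugates of $\lambda^k$'' (minimal polynomial roots, counted without multiplicity) and the multiset $\{\lambda_i^k\}$; handling this via the Galois-closure automorphism, as above, keeps that bookkeeping clean. A mild remark worth including is that the conclusion fails for general algebraic units — for instance $\lambda$ a root of unity times a Perron number — which is exactly why the Perron hypothesis enters in the last line.
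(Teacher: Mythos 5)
Your proof is correct, and the paper does not prove this lemma itself: it cites the proof of Lemma~8.2 in Strenner~\cite{Strenner}, where the same standard argument appears. The decisive step you use --- choosing a Galois-closure automorphism $\sigma$ fixing $\lambda^k$ but moving $\lambda$, so that $\sigma(\lambda)/\lambda$ is a nontrivial $k$-th root of unity and hence $\sigma(\lambda)$ is a conjugate of modulus exactly $\lambda$ --- is exactly right, and it correctly converts an arbitrary collision $\lambda_i^k=\lambda_j^k$ (which a priori might involve only conjugates of submaximal modulus) into one involving $\lambda$ itself, which is where the Perron hypothesis is decisive.
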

\medskip

\noindent
The property of Perron numbers highlighted by this lemma is a key ingredient to proving the following proposition.

\begin{prop}
\label{Perrontraces}
Let $\lambda$ be a Perron number. Then, we have one of the following.\\

\begin{enumerate}
\item 
\label{firstprop}
If $[\mathbf{Q}(\lambda):\mathbf{Q}]$ is odd, then for all $k$ we have 
\[\mathbf{Q}(\lambda + \lambda^{-1}) = \mathbf{Q}(\lambda^k + \lambda^{-k}).\]
\item 
\label{secondprop}
If $[\mathbf{Q}(\lambda):\mathbf{Q}]$ is even, then for all $k$ we have
\[\mathbf{Q}(\lambda + \lambda^{-1}) = \mathbf{Q}(\lambda^{2k+1} + \lambda^{-(2k+1)})\]
and
\[\mathbf{Q}(\lambda^2 + \lambda^{-2}) = \mathbf{Q}(\lambda^{2k} + \lambda^{-2k})\]
with $\mathbf{Q}(\lambda + \lambda^{-1}) = \mathbf{Q}(\lambda^2 + \lambda^{-2})$ if and only if $-\lambda^{-1}$ is {not} a Galois conjugate of $\lambda$.
\end{enumerate}
 \end{prop}

\noindent
We note that it is possible for the stretch factor~$\lambda$ of an orientation-reversing pseudo-Anosov map to have~$-\lambda^{-1}$ as a Galois conjugate. 
The following example describes an instance of this phenomenon in the simplest case of the torus.

\begin{example}\emph{
The golden ratio~$\phi = \frac{1 + \sqrt{5}}{2}$ is a bi-Perron number with minimal polynomial~$t^2 - t - 1$. 
By definition,~$-\phi^{-1} = \frac{1 - \sqrt{5}}{2}$ is a Galois conjugate of~$\phi$, 
and we have that $\mathbf{Q}(\phi) = \mathbf{Q}(\phi + \phi^{-1}) = \mathbf{Q}(\sqrt{5})$ but $\mathbf{Q}(\phi^2 + \phi^{-2}) = \mathbf{Q}$.
While the golden ratio is not the stretch factor of an orientation-preserving Anosov map of the torus, it is the stretch factor of an orientation-reversing one: 
indeed, the spectral radius of the matrix~$\begin{pmatrix}  1 & 1 \\ 1 & 0\end{pmatrix}$ is the golden ratio. 
}
\end{example}

\noindent
We now prove Proposition~\ref{Perrontraces}, which will be important in the proof of Theorem~\ref{bi-Perron_characterisation} below. 

\begin{proof}[Proof of Proposition~\ref{Perrontraces}]
We start by noting that since~$\lambda$ is a Perron number, then Lemma~\ref{Perronpowers} tells us that~$[\mathbf{Q}(\lambda):\mathbf{Q}] = [\mathbf{Q}(\lambda^k):\mathbf{Q}]$ for all positive integers~$k$. This immediately implies that~$\mathbf{Q}(\lambda) = \mathbf{Q}(\lambda^k)$ for all~$k$, since the former is a field extension of the latter.
\medskip

\noindent
We now prove part~(\ref{firstprop}) of the proposition by assuming that~$[\mathbf{Q}(\lambda):\mathbf{Q}]$ is odd. Since~$\mathbf{Q}(\lambda^k)$ is a field extension of~$\mathbf{Q}(\lambda^k+\lambda^{-k})$, and since~$\lambda^k$ is a root of the polynomial~$t^2 -(\lambda^k+\lambda^{-k})t + 1$, then we must have that 
\begin{align}
\label{degree_rel}
\tag{$\ast$}
[\mathbf{Q}(\lambda^k):\mathbf{Q}(\lambda^k + \lambda^{-k})] = 1 \ \text{or} \ 2. 
\end{align}

\noindent
Now this degree cannot equal 2 because $\mathbf{Q}(\lambda) = \mathbf{Q}(\lambda^k)$, hence $[\mathbf{Q}(\lambda^k):\mathbf{Q}]$ is odd, so by the tower theorem for field extensions, none of the intermediate extensions can have even degree.  Hence, $\mathbf{Q}(\lambda)=\mathbf{Q}(\lambda^k) = \mathbf{Q}(\lambda^k + \lambda^{-k})$ for all positive integers~$k$. In particular, we have that~$\mathbf{Q}(\lambda + \lambda^{-1}) = \mathbf{Q}(\lambda^k + \lambda^{-k})$ for all $k$.
\medskip

\noindent
The proof of part~(\ref{secondprop}) of the proposition will be broken into two steps. The first step will be to prove that the equality $\mathbf{Q}(\lambda + \lambda^{-1}) = \mathbf{Q}(\lambda^2 + \lambda^{-2})$ holds if and only if $-\lambda^{-1}$ is not a Galois conjugate of $\lambda$. We do this by proving the contrapositive. The second step will be to prove the general equalities in the statement by considering the cases for when $-\lambda^{-1}$ is a Galois conjugate or not.
\medskip

\noindent
It is important to note that $-\lambda^{-1}$ can only be a Galois conjugate in the even degree case since if it is a conjugate, then for any other conjugate $\mu$, then so is~$-\mu^{-1}$. Hence the minimal polynomial has an even number of roots.
\medskip

\noindent
We start by assuming that $[\mathbf{Q}(\lambda): \mathbf{Q}]$ is even. Now, because of the inclusions~$\mathbf{Q}(\lambda^2 + \lambda^{-2}) \subseteq \mathbf{Q}(\lambda + \lambda^{-1}) \subseteq \mathbf{Q}(\lambda) = \mathbf{Q}(\lambda^2)$ then we have the following tower, with the possible degree of each extension listed:

\[\begin{array}{c}
		\mathbf{Q}(\lambda) = \mathbf{Q}(\lambda^2)\\
		\hspace{.4in}\Big\vert \text{ {\small $1$ or $2$} }\\
		\mathbf{Q}(\lambda + \lambda^{-1})\\
		\hspace{.4in}\Big\vert \text{ {\small $1$ or $2$} }\\
		\mathbf{Q}(\lambda^2 + \lambda^{-2})
	\end{array}\]
\medskip

\noindent
We start by assuming that $\mathbf{Q}(\lambda + \lambda^{-1}) \neq \mathbf{Q}(\lambda^2 + \lambda^{-2})$, and show that $-\lambda^{-1}$ must be a Galois conjugate of $\lambda$. Since these fields are not equal, then we immediately have that the top extension must be degree $1$ because we know from~(\ref{degree_rel}) that the degree of $\mathbf{Q}(\lambda^2)$ over $\mathbf{Q}(\lambda^2 + \lambda^{-2})$ is $1$ or $2$. Therefore, we have~$\mathbf{Q}(\lambda + \lambda^{-1}) = \mathbf{Q}(\lambda) = \mathbf{Q}(\lambda^2)$, and the tower collapses to

\[\begin{array}{c}
		\mathbf{Q}(\lambda + \lambda^{-1}) = \mathbf{Q}(\lambda) = \mathbf{Q}(\lambda^2)\\
		\hspace{.2in}\Big\vert \ 2\\
		\mathbf{Q}(\lambda^2 + \lambda^{-2}).
	\end{array}\]
\medskip

\noindent
Hence, $t^2 - (\lambda^2 + \lambda^{-2})t + 1$ is the minimal polynomial for $\lambda^2$ over $\mathbf{Q}(\lambda^2 + \lambda^{-2})$. Thus, we see that the non-identity automorphism $\phi\in\text{Gal}(\mathbf{Q}(\lambda^2)/\mathbf{Q}(\lambda^2 + \lambda^{-2}))$ maps $\lambda^2$ to  $\lambda^{-2}$. Hence, $[\phi(\lambda)]^2 = \lambda^{-2}$, and we get $\phi(\lambda) = \pm \lambda^{-1}$. Note, we are allowed to apply $\phi$ to $\lambda$ in this case since $\mathbf{Q}(\lambda) = \mathbf{Q}(\lambda^2)$. Now, it cannot be the case that $\phi(\lambda) = \lambda^{-1}$ because this would imply that $\mathbf{Q}(\lambda + \lambda^{-1}) = \mathbf{Q}(\lambda^2)$ is fixed by $\text{Gal}(\mathbf{Q}(\lambda^2)/\mathbf{Q}(\lambda^2 + \lambda^{-2}))$, which contradicts the definition of the Galois group. Therefore, $\phi(\lambda)=-\lambda^{-1}$, which implies that $-\lambda^{-1}$ is a Galois conjugate of $\lambda$.
\medskip

\noindent
Now, running the argument in reverse, if $-\lambda^{-1}$ is a Galois conjugate of $\lambda$ then there exists a $\mathbf{Q}$-automorphism $\phi$ of $\mathbf{Q}(\lambda)$ such that $\phi(\lambda) = -\lambda^{-1}$. This immediately implies that $\mathbf{Q}(\lambda^2 + \lambda^{-2})$ is fixed by $\phi$ but $\mathbf{Q}(\lambda + \lambda^{-1})$ is not, therefore, we have $\mathbf{Q}(\lambda + \lambda^{-1}) \neq \mathbf{Q}(\lambda^2 + \lambda^{-2})$.
\medskip

\noindent
We now generalize the argument and prove the equalities in statement~(\ref{secondprop}) of the proposition. Suppose that $-\lambda^{-1}$ is a Galois conjugate of~$\lambda$. Then, the automorphism 
$\phi$ that interchanges $\lambda$ and $-\lambda^{-1}$ fixes $\mathbf{Q}(\lambda^{2k} + \lambda^{-2k})$ for all positive integers~$k$. This implies that \[ [\mathbf{Q}(\lambda^{2k}): \mathbf{Q}(\lambda^{2k} + \lambda^{-2k})] = 2\] 
for all $k$. But $\mathbf{Q}(\lambda^{2k} + \lambda^{-2k})$ is a subfield of $\mathbf{Q}(\lambda^2 + \lambda^{-2})$ for all $k$ so \[\mathbf{Q}(\lambda^2 + \lambda^{-2}) = \mathbf\mathbf{Q}(\lambda^{2k} + \lambda^{-2k}) \] for all $k$. On the other hand $\mathbf{Q}(\lambda^{2k+1} + \lambda^{-(2k+1)})$ is not fixed for any $k$, 
hence \[ \mathbf{Q}(\lambda^{2k+1} + \lambda^{-(2k+1)}) = \mathbf{Q}(\lambda^{2k + 1}) = \mathbf{Q}(\lambda) = \mathbf{Q}(\lambda + \lambda^{-1}) \] for all $k$.
\medskip

\noindent
Now, in the case where $-\lambda^{-1}$ is not a conjugate of $\lambda$, above arguments immediately imply that $\mathbf{Q}(\lambda + \lambda^{-1}) = \mathbf{Q}(\lambda^2 + \lambda^{-2})$. Therefore, both of these fields must either equal $\mathbf{Q}(\lambda) = \mathbf{Q}(\lambda^k)$ for all positive integers~$k$, or for no $k$. Suppose that the following equality holds for all $k$:
\[\mathbf{Q}(\lambda + \lambda^{-1}) = \mathbf{Q}(\lambda^2 + \lambda^{-2}) = \mathbf{Q}(\lambda^k).\]
Then, it must be the case that $\mathbf{Q}(\lambda^{2k} + \lambda^{-2k}) = \mathbf{Q}\left(\lambda^{2k+1} + \lambda^{-(2k+1)}\right) = \mathbf{Q}(\lambda^k)$ for all $k$, because if equality fails for some $k$, then there would have to exist a~$\mathbf{Q}(\lambda^{n} + \lambda^{-n})$-automorphism $\phi$ (where $n=2k$ or $n=2k+1$)  that interchanges~$\lambda^n$ with~$\lambda^{-n}$. Thus, $\phi(\lambda) = \lambda^{-1}$, since it cannot equal $-\lambda^{-1}$ by assumption. Hence, $\mathbf{Q}(\lambda + \lambda^{-1}) = \mathbf{Q}(\lambda)$ is a fixed field, which is a contradiction. Therefore, for all $k$ we have \[\mathbf{Q}(\lambda^{2k} + \lambda^{-2k}) = \mathbf{Q}\left(\lambda^{2k+1} + \lambda^{-(2k+1)}\right) = \mathbf{Q}(\lambda^k).\]
Finally, if the fields $\mathbf{Q}(\lambda + \lambda^{-1}) = \mathbf{Q}(\lambda^2 + \lambda^{-2})$ do not equal $\mathbf{Q}(\lambda) = \mathbf{Q}(\lambda^k)$ for any $k$, then since $\mathbf{Q}(\lambda^k + \lambda^{-k})$ is a subfield of $\mathbf{Q}(\lambda + \lambda^{-1})$ for all $k$,~(\ref{degree_rel}) implies that $[\mathbf{Q}(\lambda^k):\mathbf{Q}(\lambda^k + \lambda^{-k})] = 2$ for all $k$, therefore \[\mathbf{Q}(\lambda + \lambda^{-1}) = \mathbf{Q}(\lambda^2 + \lambda^{-2}) = \mathbf{Q}(\lambda^k + \lambda^{-k})\] for all $k$.
\end{proof}

\noindent
The following result is the key to the construction of a geometric situation that corresponds to the 
power of a bi-Perron number. It was used by the second author to show that every Salem number has a 
power that is the stretch factor of a pseudo-Anosov homeomorphism arising from 
Thurston's construction~\cite{Pankau}. We now show that the proof works almost 
identically for bi-Perron numbers all of whose Galois conjugates are contained in~$\mathbf{S}^1\cup\mathbf{R}$. 
This extension is also presented in the second author's thesis~\cite{PankauThesis}.

\begin{prop}
\label{integer_matrix}
Let~$\lambda$ be a bi-Perron number all of whose Galois conjugates are contained 
in~$\mathbf{S}^1\cup\mathbf{R}$. Then there exists a positive integer~$k$ so that~$\lambda^k + \lambda^{-k}$ 
equals the spectral radius of a positive symmetric integer matrix. 
\end{prop}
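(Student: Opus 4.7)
The plan is to reduce the proposition to a question about totally real Perron numbers and then invoke the construction from the second author's Salem-number paper~\cite{Pankau}, which transfers to this setting once the reduction is in place.

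First, I would show that for every even positive integer $k$, the number $\mu_k := \lambda^k + \lambda^{-k}$ is a totally real Perron number. By Lemma~\ref{Perronpowers}, the Galois conjugates of $\lambda^k$ are the $\lambda_i^k$, where $\lambda_i$ runs through the Galois conjugates of $\lambda$; hence every Galois conjugate of $\mu_k$ is of the form $\lambda_i^k + \lambda_i^{-k}$. Since each $\lambda_i$ lies in $\mathbf{S}^1 \cup \mathbf{R}$, so does $\lambda_i^k$, and therefore each such expression is real. Unimodular conjugates contribute values in $[-2, 2] \subset (-\mu_k, \mu_k)$, while real conjugates with $|\lambda_i| \in (\lambda^{-1}, \lambda)$ give $|\lambda_i^k + \lambda_i^{-k}| = |\lambda_i|^k + |\lambda_i|^{-k} < \mu_k$ by monotonicity of $x + x^{-1}$ on $(1, \infty)$. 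The only remaining case is $\lambda_i \in \{\pm \lambda^{\pm 1}\}$; since $k$ is even, each such $\lambda_i$ satisfies $\lambda_i^k + \lambda_i^{-k} = \mu_k$ and therefore does not violate the Perron property. Hence $\mu_k$ is strictly the largest Galois conjugate in absolute value, and is a totally real Perron number.

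Second, I would invoke the construction from~\cite{Pankau}. There, the analogous statement is proved for Salem numbers by producing a positive symmetric integer matrix whose spectral radius equals $\lambda^m + \lambda^{-m}$ for a suitable positive integer $m$. Applying the same construction with $\lambda^2$ in place of $\lambda$ yields a positive symmetric integer matrix whose spectral radius equals $\lambda^{2m} + \lambda^{-2m}$, which is the desired conclusion with $k = 2m$.

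The main obstacle is the second step: one must verify that Pankau's construction goes through unchanged when the input bi-Perron number $\lambda^2$ has some real, rather than only unimodular, Galois conjugates. The expectation is that the construction is driven by the totally real Perron character of $\mu_k$ rather than by any finer feature specific to Salem numbers, a point already signalled by the second author's thesis~\cite{PankauThesis}. Once this is confirmed, the proposition follows immediately from the two steps above.
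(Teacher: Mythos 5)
Your proposal follows the same overall strategy as the paper: replace $\lambda$ by a suitable power so that $\lambda^k+\lambda^{-k}$ is a totally real Perron number, then feed this into the construction of~\cite{Pankau}. Your first step is fine (and a bit more explicit than the paper, which simply passes to $\lambda^2$ when $-\lambda^{-1}$ is a conjugate and then observes that $\lambda+\lambda^{-1}$ is totally real Perron). The issue is that the ``main obstacle'' you flag at the end \emph{is} the content of the paper's proof, and you leave it as an expectation rather than carrying it out. Concretely, two places in~\cite{Pankau} genuinely use the Salem hypothesis and need adaptation. First, the characteristic polynomial of the symplectic matrix $\mathcal{M}=\bigl(\begin{smallmatrix}Q & -I\\ I & 0\end{smallmatrix}\bigr)$ is $t^n f(t+t^{-1})(t^2-t+1)^e$; when $\lambda$ has real non-reciprocal conjugates, $t^n f(t+t^{-1})$ equals $g(t)g^*(t)$ rather than $g(t)$ (where $g$ is the minimal polynomial of $\lambda$), so one must check that the subsequent steps only use that this polynomial is integral with constant term $\pm1$, not that it is irreducible or equal to $g$. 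Second, the positivity of the integer matrix $\mathcal{Q}_k$ for large $k$ comes from the ratio of $\lambda^k+\lambda^{-k}$ to $\lambda_2^k+\lambda_2^{-k}$ (the second-largest eigenvalue) tending to infinity; this is where the bi-Perron property together with $-\lambda^{-1}$ not being a conjugate is actually used, and it is the reason the paper only passes to $\lambda^2$ when forced to. Your plan is sound, but these verifications are the substance of the argument, not a routine check.
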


\begin{proof}
We very closely follow the second author's proof in \cite{Pankau}. For the convenience of the reader, 
we summarise the key steps and mention where we have to pay attention because
our setting is slightly more general than in the original argument. 
\medskip

\noindent
Let~$\lambda$ be a bi-Perron number all of whose Galois conjugates lie 
in~$\mathbf{S}^1\cup\mathbf{R}$. Then~$\lambda+\lambda^{-1}$ is a totally real Perron number. 
Indeed,~$\mathbf{Q}(\lambda+\lambda^{-1})$ is a subfield of~$\mathbf{Q}(\lambda)$, so
every embedding of~$\mathbf{Q}(\lambda+\lambda^{-1})$ into~$\mathbf{C}$ is the 
restriction of an embedding of~$\mathbf{Q}(\lambda)$ into~$\mathbf{C}$. In particular, each Galois conjugate 
of~$\lambda+\lambda^{-1}$ is of the form~$\lambda_i + \lambda_i^{-1}$, where~$\lambda_i$ is a Galois conjugate of~$\lambda$. 
Since~$\lambda_i \in \mathbf{S}^1\cup\mathbf{R}$, it follows that~$\lambda_i + \lambda_i^{-1} \in \mathbf{R}$.
\medskip

\noindent
Let~$f(t)$ be the minimal polynomial of~$\lambda+\lambda^{-1}$, and denote by~$n$ its degree. 
Without loss of generality, we assume that~$-\lambda^{-1}$ is not a Galois conjugate of~$\lambda$. 
Indeed, if~$-\lambda^{-1}$ is a Galois conjugate of~$\lambda$, we can simply run the argument for~$\lambda^2$.
\smallskip
 
\noindent
\emph{Step 1.} By a result of Estes~\cite{Estes}, there exists a rational symmetric matrix~$Q$ of size~$(n+e)\times(n+e)$
with characteristic polynomial~$f(t)(t-1)^e$, where~$e$ equals~$1$ or~$2$. 
\medskip

\noindent
\emph{Step 2.} By conjugation with an element in~$\mathrm{O}(n+e,\mathbf{Q})$ and possibly a small perturbation,
we may assume that the eigenvector of the matrix~$Q$ for the eigenvalue~$\lambda+\lambda^{-1}$ is positive, compare with 
the discussion starting with Proposition~5.2 in \cite{Pankau}.
\medskip

\noindent
\emph{Step 3.} Define the matrix~\[ \mathcal{M} = \begin{pmatrix} Q & -I \\ I & 0 \end{pmatrix}.\]
We now describe the characteristic polynomial of~$\mathcal{M}$. In the proof of Proposition~5.3 in
\cite{Pankau}, it is shown that~$\mu$ is an eigenvalue for~$\mathcal{M}$ with 
eigenvector~$(\mathbf{v},\mu^{-1}\mathbf{v})^\top$ exactly if~$\mu + \mu^{-1}$ is an eigenvalue for~$Q$ with 
eigenvector~$\mathbf{v}$. Hence, the characteristic polynomial of~$\mathcal{M}$ equals~$t^nf(t+t^{-1})(t^2-t+1)^e$. 
We note the following discrepancy with Proposition~5.3 in \cite{Pankau}: if the characteristic 
polynomial~$g(t)$ of~$\lambda$ is not reciprocal, then the polynomial~$t^nf(t+t^{-1})$ 
equals~$g(t)g^*(t)$, where~$g^*(t)=t^ng(t^{-1})$. On the other hand, if~$g(t)$ is reciprocal, 
which is the case exactly if~$\lambda$ has a Galois conjugate on the unit circle 
(for example if~$\lambda$ is a Salem number), then~$t^nf(t+t^{-1})$ equals~$g(t)$. 
In any case, the characteristic polynomial of~$\mathcal{M}$ has integer coefficients and~$\det(\mathcal{M})=1$. 
\medskip

\noindent
\emph{Step 4.} By Proposition~5.4 in \cite{Pankau}, for any positive integer~$k$,~$\mathcal{M}^k+\mathcal{M}^{-k}$ is a block diagonal matrix with two blocks~$\mathcal{Q}_k$. 
Here,~$\mathcal{Q}_k$ is a rational symmetric matrix with characteristic polynomial~$f_k(t)(t-a)^e$, 
where~$f_k(t)$ is the minimal polynomial of~$\lambda^k+\lambda^{-k}$ 
and~$a$ is among the numbers~$-2,-1,1,2$. The proof does not depend on whether
the characteristic polynomial~$g(t)$ of~$\lambda$ is reciprocal or not. Also, by the discussion right above 
Proposition~5.5 in \cite{Pankau}, the eigenspaces of~$Q$ and~$\mathcal{Q}_k$ agree. In particular, 
the eigenvector~$\mathbf{v}$ for the eigenvalue~$\lambda^k+\lambda^{-k}$ of~$\mathcal{Q}_k$ is positive.
\medskip

\noindent
\emph{Step 5.} 
We now prove that the matrix~$\mathcal{Q}_k$ is positive for~$k$ large enough. 
We write~$\mathbf{e}_i = c_i\mathbf{v} + \mathbf{w}_i$ for every basis vector~$\mathbf{e}_i$, 
where~$\mathbf{w}_i$ is a fixed vector (independent of~$k$) in the orthogonal complement of~$\mathbf{v}$, and~$c_i>0$.
Since~$\mathbf{w}_i$ lies in the orthogonal complement to~$\mathbf{v}$, 
it is a linear combination of eigenvectors of~$\mathcal{Q}_k$ other than~$\mathbf{v}$. 
In particular, the modulus of every coefficient of~$\mathcal{Q}_k\mathbf{w}_i$ is bounded from above 
by~$|\lambda_2^k + \lambda_2^{-k}|\cdot ||\mathbf{w}_i||_{\infty}$, 
where~$\lambda_2+\lambda_2^{-1}$ is the second-largest root in modulus of~$f(t)$. 
Now, since~$\lambda$ is a bi-Perron number and~$-\lambda^{-1}$ is not among its Galois conjugates,
the ratio between~$\lambda^k+\lambda^{-k}$ and~$\lambda_2^k + \lambda_2^{-k}$ becomes arbitrarily large 
when~$k$ tends to infinity. Therefore,~\[\mathcal{Q}_k\mathbf{e}_i = c_i(\lambda^k+\lambda^{-k})\mathbf{v} + \mathcal{Q}_k\mathbf{w}_i\] becomes positive for large~$k$, 
since~$c_i>0$ and~$\mathbf{v}$ 
is a positive vector.
\medskip

\noindent
\emph{Step 6.} For large enough~$k$, the matrix~$\mathcal{M}^k$ has integer coefficients by 
Proposition~5.5 in \cite{Pankau}. Hence, also~$\mathcal{M}^{-k}$ has integer coefficients 
for large enough~$k$, since~$\det(\mathcal{M})=1$. In particular, also~$\mathcal{Q}_k$ has integer 
coefficients for large enough~$k$. This finishes the proof that for~$k$ large enough, the 
number~$\lambda^k+\lambda^{-k}$ equals the spectral radius 
of a positive symmetric integer matrix~$\mathcal{Q}_k$. 
\end{proof}

\section{The Coxeter transformations and Thurston's construction}
\label{Coxeter_pA_sec}

\subsection{The Coxeter transformation}

\noindent
Coxeter groups are abstract generalisations of reflection groups. They admit a presentation encoded in a 
graph with weighted edges, the so-called Coxeter diagram, and they are linear by Tits' representation. 
As we can single out the only input we need from the theory of Coxeter groups in Lemma~\ref{spectra_relation} 
below, we do not give the definitions and instead refer to Bourbaki's classic~\cite{Bourbaki}.  
\medskip

\noindent
In case the underlying graph of a Coxeter diagram is bipartite, there is a well-defined conjugacy class 
of matrices obtained via Tits' representation, the so-called bipartite Coxeter transformation, see, for example, 
McMullen~\cite{McMullen}. By a result of A'Campo, the spectrum of this matrix is contained 
in~$\mathbf{S}^1 \cup \mathbf{R}_{>0}$, and determines, for example, whether the group is finite~\cite{A'Campo}.  
All we need for our purposes is the following formula relating the spectra of the Coxeter adjacency matrix 
and the bipartite Coxeter transformation. We do not give the definition of the Coxeter 
adjacency matrix, but simply note that in our case of Coxeter diagrams with simple edges 
(which, in the language of Coxeter groups, means that every edge is of weight~3), the Coxeter adjacency 
matrix equals the ordinary adjacency matrix of the underlying abstract graph.

\begin{lem}
\label{spectra_relation}
Let~$\Omega$ be the adjacency matrix of a finite bipartite graph with simple edges, understood as a 
Coxeter diagram~$\Gamma$ with edge weights equal to~$3$. Then the eigenvalues~$\lambda_i$ 
of the bipartite Coxeter transformation associated with~$\Gamma$ are related to the 
eigenvalues~$\alpha_i$ of~$\Omega$ by \[ \alpha_i^2 -2= \lambda_i+\lambda_i^{-1} .\]
\end{lem}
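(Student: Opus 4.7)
The plan is to exploit the bipartite structure to realise the Coxeter transformation as a product of two involutions whose sum is exactly $\Omega$, and then derive the eigenvalue relation from the matrix identity $c+c^{-1}=\Omega^2-2I$.

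First I would set up Tits' representation concretely. Because every edge has weight~$3$, the bilinear form on the root space has matrix $I-\tfrac{1}{2}\Omega$, with reflections $s_i(v)=v-2B(v,e_i)e_i$. Write the vertex set as $V_1\sqcup V_2$ according to the bipartition, so that
$$\Omega=\begin{pmatrix} 0 & B\\ B^{T} & 0\end{pmatrix}.$$
Since no two vertices in $V_1$ are adjacent, the corresponding simple reflections pairwise commute, so $\sigma_+=\prod_{i\in V_1}s_i$ is a well-defined involution; likewise for $\sigma_-=\prod_{i\in V_2}s_i$. The bipartite Coxeter transformation is, up to conjugacy, $c=\sigma_+\sigma_-$.

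Next I would compute $\sigma_{\pm}$ in block form with respect to $V_1\sqcup V_2$. For $j\in V_1$, the only reflection in $\sigma_+$ that acts nontrivially on $e_j$ is $s_j$, which negates it; for $j\in V_2$, the commuting reflections add up to $\sigma_+(e_j)=e_j+\sum_{i\in V_1,\,i\sim j}e_i$. Collecting these gives
$$\sigma_+=\begin{pmatrix}-I & B\\ 0 & I\end{pmatrix},\qquad \sigma_-=\begin{pmatrix} I & 0\\ B^{T} & -I\end{pmatrix}.$$
The crucial observation, immediate from this, is the identity $\sigma_++\sigma_-=\Omega$. A short computation also verifies $\sigma_+^2=\sigma_-^2=I$, as it must.

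Then, using only that $\sigma_\pm$ are involutions summing to $\Omega$,
$$c+c^{-1}=\sigma_+\sigma_-+\sigma_-\sigma_+=(\sigma_++\sigma_-)^2-\sigma_+^2-\sigma_-^2=\Omega^2-2I.$$
From this matrix identity I would conclude the statement: every eigenvector $v$ of $\Omega$ with eigenvalue $\alpha$ is an eigenvector of $c+c^{-1}$ with eigenvalue $\alpha^2-2$; each such eigenvalue of $c+c^{-1}$ corresponds (via $\lambda+\lambda^{-1}=\alpha^2-2$) to a pair of reciprocal eigenvalues of $c$. A multiplicity count (using that $c$ has $|V_1|+|V_2|$ eigenvalues matching the eigenvalues of $\Omega$ paired into reciprocal pairs, with $\alpha=0$ contributing $\lambda=-1$) confirms that every eigenvalue of $c$ arises this way.

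The main obstacle is the bookkeeping in the explicit block form of $\sigma_\pm$: one must pin down the correct sign conventions in Tits' representation and verify that the off-diagonal blocks really are $B$ and $B^T$ and not some rescaling. Once that is in hand, the identity $c+c^{-1}=\Omega^2-2I$ is essentially automatic and the lemma follows.
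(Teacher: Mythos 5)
Your argument is correct and is essentially a self-contained rendering of the proof of Proposition~5.3 in McMullen's paper, which is precisely what the paper cites for this lemma without reproducing. The bipartite decomposition of $\Omega$ into the two commuting-product involutions $\sigma_{\pm}$ with explicit block form, followed by the matrix identity $c+c^{-1}=\Omega^2-2I$, is exactly McMullen's route. The only place to be a touch more careful is the final eigenvalue bookkeeping: $c$ need not be diagonalizable (Jordan blocks at $\pm1$ occur in the affine cases), so the clean way to finish is to observe that on each $\beta$-eigenspace of $\Omega^2-2I$ the restriction of $c$ satisfies $c^2-\beta c+I=0$, which pins down the multiset of eigenvalues of $c$ as the solutions of $\lambda+\lambda^{-1}=\alpha_i^2-2$, as your multiplicity count intends.
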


\begin{proof}
This is exactly what is shown in the proof of Proposition~5.3 of McMullen's article~\cite{McMullen}.
\end{proof}

\subsection{Pseudo-Anosov stretch factors and Thurston's construction}
A homeomorphism~$\phi$ of a closed surface~$\Sigma$ is called pseudo-Anosov if there exists 
a pair of transverse, singular measured foliations of~$\Sigma$ so that~$\phi$ stretches one of them by a real 
number~$\lambda>1$ and shrinks the other by a factor~$1/\lambda$. The number~$\lambda$ is 
called the stretch factor of the pseudo-Anosov homeomorphism, and it is known to always be a 
bi-Perron number by a result of Fried~\cite{Fried85}. 
\medskip

\noindent
Thurston gave a construction of pseudo-Anosov homeomorphisms in terms of twists along 
multicurves~\cite{Thurston}. We do not review the whole construction, but instead summarise 
the main input we need from it in Lemma~\ref{Thurston_lemma} below.

\begin{lem}
\label{Thurston_lemma}
For a bi-Perron number~$\lambda>1$, the following are equivalent.
\begin{enumerate}
\item The number~$\lambda$ is the stretch factor of a pseudo-Anosov homeomorphism arising 
from Thurston's construction. 
\item The number~$\lambda$ is the spectral radius of a product of matrices 
\[ \begin{pmatrix} 1 & r \\ 0 & 1 \end{pmatrix}, \begin{pmatrix} 1 & 0 \\ -r & 1\end{pmatrix},\]
where~$r$ is the spectral radius of an adjacency matrix of a finite bipartite graph with simple edges.
\end{enumerate}
\end{lem}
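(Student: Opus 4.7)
The plan is to summarise Thurston's construction and read off its matrix form via the standard affine representation associated with the invariant singular flat structure. Given a filling pair of multicurves $A = a_1\cup\cdots\cup a_n$ and $B = b_1\cup\cdots\cup b_m$ on a closed orientable surface, the intersection data yields a bipartite graph $\Gamma$ whose vertices are the components of $A$ and $B$ and whose edges correspond to intersection points. The graph has simple edges precisely when every pair $(a_i, b_j)$ meets at most once, and conversely every finite bipartite graph with simple edges arises from such a filling pair on an appropriate surface, via the regular-neighbourhood (ribbon graph) construction. This sets up a dictionary between the geometric input of Thurston's construction (with simple intersections) and the combinatorial input of a bipartite graph with simple edges.

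Thurston endows $\Sigma$ with a singular flat structure in which $A$ and $B$ are respectively horizontal and vertical with respect to a pair of transverse measured foliations. This determines a homomorphism $\rho$ from the subgroup of the mapping class group generated by the Dehn twists along the components of $A$ and $B$ to $\mathrm{PSL}(2,\mathbf{R})$. The main computation, which I view as the technical heart of the lemma, is that under $\rho$ the composite twist $T_A = T_{a_1}\cdots T_{a_n}$ maps to $\begin{pmatrix} 1 & r \\ 0 & 1 \end{pmatrix}$ and $T_B^{-1} = T_{b_m}^{-1}\cdots T_{b_1}^{-1}$ maps to $\begin{pmatrix} 1 & 0 \\ -r & 1 \end{pmatrix}$, where $r$ is the Perron--Frobenius eigenvalue of the bipartite adjacency matrix of $\Gamma$ (equivalently, $\sqrt{\mu}$ with $\mu$ the spectral radius of $NN^{\top}$ for $N_{ij}=i(a_i,b_j)$). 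The cleanest way to see this is to notice that the transverse measures of $A$ and $B$ forming the flat structure are the positive Perron--Frobenius eigenvectors of $NN^{\top}$ and $N^{\top}N$, and that the twists act by affine shears of slope $r$ in the corresponding direction. A key property of this representation is that a word $w$ in $T_A$ and $T_B^{-1}$ represents a pseudo-Anosov element if and only if $\rho(w)$ is hyperbolic, in which case the stretch factor of the pseudo-Anosov equals the spectral radius of $\rho(w)$.

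With this dictionary in hand, the equivalence is immediate. For (1)$\Rightarrow$(2), a pseudo-Anosov from Thurston's construction is by definition such a word in $T_A$ and $T_B^{-1}$, and its stretch factor is the spectral radius of the corresponding matrix product of the prescribed form, with $r$ the spectral radius of the bipartite graph's adjacency matrix. For (2)$\Rightarrow$(1), given $r$ as the spectral radius of a simple bipartite adjacency matrix and a word $w$ in the two matrix types whose product has spectral radius $\lambda>1$, realise the graph as the intersection graph of a filling pair on some surface and apply Thurston's construction to the same word in $T_A$ and $T_B^{-1}$; since each factor has determinant one, so does the product, and the trace equals $\lambda+\lambda^{-1}>2$, hence the matrix is hyperbolic and the mapping class is pseudo-Anosov with stretch factor $\lambda$. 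The main obstacle, as already indicated, is the identification of the shear parameter $r$ as the spectral radius of the bipartite adjacency matrix; this is classical and can be extracted from Thurston's original paper~\cite{Thurston} or from the expositions in Farb--Margalit or in McMullen~\cite{McMullen}.
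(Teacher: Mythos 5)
Your plan is structurally similar to the paper's: reduce the lemma to a dictionary between filling pairs of multicurves and bipartite graphs, plus the fact that Thurston's construction sends the twist products to shears by the spectral radius $r$ of the intersection matrix. The (2)$\Rightarrow$(1) direction and the $\mathrm{PSL}(2,\mathbf{R})$-representation details you give are fine (aside from the minor slip that the trace of the product could be $-(\lambda+\lambda^{-1})$ rather than $\lambda+\lambda^{-1}$; the conclusion that the matrix is hyperbolic still holds since $|\mathrm{tr}| > 2$).

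However, there is a genuine gap in your (1)$\Rightarrow$(2) direction. You explicitly set up your dictionary only for ``Thurston's construction (with simple intersections),'' i.e.\ the case where every pair $(a_i,b_j)$ meets at most once, so that the associated bipartite graph $\Gamma$ is simple. But Thurston's construction places no such restriction: the filling multicurves may have $i(a_i,b_j)\ge 2$, in which case the geometric intersection matrix $\begin{pmatrix}0 & N \\ N^\top & 0\end{pmatrix}$ is a symmetric nonnegative integer matrix with entries possibly larger than $1$, and the corresponding graph is a multigraph rather than a simple bipartite graph. Your argument then produces $r$ as the spectral radius of such a matrix, not of a simple bipartite adjacency matrix as the lemma requires, and you give no reason why the two classes of spectral radii coincide. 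The paper closes exactly this gap by invoking the argument in the proof of Proposition~2.1 of Hoffman, which shows that the spectral radius of any symmetric nonnegative integer matrix is also the spectral radius of the adjacency matrix of some finite bipartite graph with simple edges. Without this (or an equivalent) reduction step, your (1)$\Rightarrow$(2) implication only covers pseudo-Anosov maps coming from filling pairs with all pairwise intersection numbers at most one, which is strictly weaker than the statement.
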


\noindent
In order to give the proof, 
we first review some notions of surface topology. A multicurve~$\alpha$ in a surface~$\Sigma$ is a 
disjoint union of simple closed curves. Two multicurves are said to intersect minimally if any pair 
of components has the minimal number of intersection points among all representatives in their 
respective isotopy classes. Two multicurves fill a closed surface~$\Sigma$ if the complement of their union consists 
of discs. Furthermore, given two 
multicurves~$\alpha= \alpha_1\cup\cdots\cup \alpha_n$ and~$\alpha'=\alpha_{n+1}\cup\cdots\cup\alpha_{n+m}$,
we define their geometric intersection matrix to be the matrix of size~$(n+m)\times(n+m)$ whose~$ij$-th 
entry equals the number of intersection points of~$\alpha_i$ and~$\alpha_j$.  

\begin{proof}[Proof of Lemma~\ref{Thurston_lemma}]
Thurston's construction~\cite{Thurston} directly implies the statement for the 
case where~$r$ is the spectral radius of a geometric intersection matrix of multicurves~$\alpha$ 
and~$\alpha'$ that intersect minimally and fill a surface~$\Sigma$. It therefore suffices to show that 
the set of numbers that appear as spectral radii of such intersection matrices equals the set of numbers 
that appear as the spectral radii of adjacency matrices of finite bipartite graphs with simple edges. 
\medskip

\noindent
Given two multicurves~$\alpha$ and~$\alpha'$ that intersect minimally and fill a surface~$\Sigma$,
their geometric intersection matrix is, by definition, a symmetric nonnegative integer matrix.  In the proof of 
Proposition~2.1 of Hoffman~\cite{Hoffman}, it is shown that any spectral radius of such a matrix is also the 
spectral radius of an adjacency matrix of a finite bipartite graph with simple edges. This proves the first direction. 
\medskip

\noindent
Conversely, let~$A$ be the adjacency matrix of a finite bipartite graph with simple edges. 
Since the matrix~$A$ is conjugate to a diagonal block matrix with each block on the diagonal corresponding to a 
connected component of the graph, we can restrict to a block realising the spectral radius of~$A$. 
In other words, we assume without loss of generality that the graph is connected. 
\medskip

\noindent
Note that if the connected finite bipartite graph we consider has only a single vertex, then this implies~$r=0$. 
In particular, the only product of matrices we get in Lemma~\ref{Thurston_lemma} is the identity matrix, which has spectral radius~$1$.
This case is irrelevant, since we restrict ourselves to bi-Perron numbers~$\lambda > 1$.  
We can therefore assume that the connected finite bipartite graph has at least two vertices.
It is now straightforward to abstractly construct a closed surface~$\Sigma$ filled by two multicurves~$\alpha$ and~$\alpha'$ 
that have the matrix~$A$ as their geometric intersection matrix. 
In order to do so, take two collections of annuli~$K_i$ and~$K'_j$ that are in one-to-one correspondence with the vertices of the bipartite graph, 
respecting the bipartition. For each edge of the bipartite graph, locally identify the 
annuli~$K_i$ and~$K'_j$ corresponding to the endpoints of the edge along a common square whose boundary alternatingly belongs to the boundary of~$K_i$ and~$K'_j$. 
We glue such that the orientation of the annuli is respected and their core curves intersect once, see Figure~\ref{surfex} for an example 
of such a glueing corresponding to the complete bipartite graph~$K_{2,3}$ on two and three vertices.

\begin{figure}[h]
\begin{center}
\def\svgwidth{400pt}
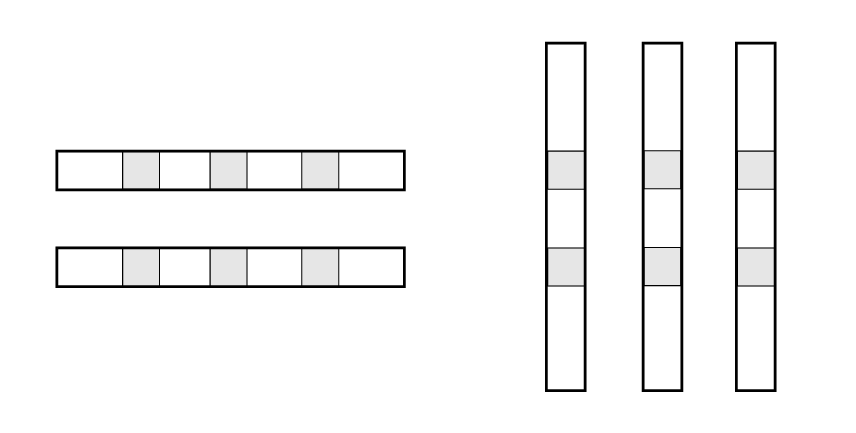
\caption{Two collections of annuli, horizontal and vertical, obtained by identifying the boundary of the rectangles as indicated by the letters~$a,b,c,d,e$. 
After identifying the squares labelled~$s_1,\dots,s_6$ pairwise by translations, the intersection matrix of the core curves of the annuli equals the adjacency 
matrix of the complete bipartite graph~$K_{2,3}$ on two and three vertices. In order to obtain the adjacency matrix of a subgraph, simply omit some of the identifications.}
\label{surfex}
\end{center}
\end{figure}

\noindent
So far, we have constructed a compact surface with boundary. 
To finish, glue a disc along each boundary component to obtain a closed surface~$\Sigma$. 
By construction, the core curves of the annuli~$K_i$ and~$K'_j$ define two multicurves~$\alpha$ and~$\alpha'$, 
respectively, filling~$\Sigma$ and with geometric intersection matrix~$A$.
Furthermore, the multicurves~$\alpha$ and~$\alpha'$ must intersect minimally, since simple closed 
curves with zero or one point of intersection always minimise the number of intersections within their respective isotopy classes.
\end{proof}

\subsection{Proof of Theorem~\ref{bi-Perron_characterisation}}
We prove the following implications: (a) implies~(c) implies~(b) implies~(a). \medskip

\noindent
\emph{(a) implies (c)}: Let~$\lambda$ be a bi-Perron number all of whose Galois conjugates are 
contained in~$\mathbf{S}^1\cup\mathbf{R}$. By Proposition~\ref{integer_matrix}, there exists a 
positive symmetric integer matrix~$M$ that has~$\lambda^k + \lambda^{-k}$ as its spectral radius, 
for some positive integer~$k$. In the proof of Proposition~2.1 of Hoffman~\cite{Hoffman}, 
it is shown that any number that is the spectral radius of a positive symmetric integer matrix is also 
the spectral radius of an adjacency matrix of a finite bipartite graph with simple edges. 
In particular,~$\lambda^k + \lambda^{-k}$ is the spectral radius of an adjacency matrix~$\Omega$ 
of a bipartite graph~$\Gamma$ with simple edges. By Lemma~\ref{spectra_relation}, the spectral 
radius~$x$ of the bipartite Coxeter transformation associated with~$\Gamma$ equals~$\lambda^{2k}$. 
Indeed, we have~$(\lambda^k+\lambda^{-k})^2 -2 = x + x^{-1}$, which 
yields~$\lambda^{2k} + \lambda^{-2k} = x + x^{-1}$ and hence~$x=\lambda^{2k}$, 
as~$x\mapsto x+x^{-1}$ is a strictly monotonic function on~$[1,\infty)$.
\medskip

\noindent
\emph{(c) implies (b)}: In the above implication, we have seen that~$\lambda^{2k}$ is the spectral 
radius of a bipartite Coxeter transformation associated with a bipartite Coxeter diagram with simple 
edges if and only if~$\lambda^k + \lambda^{-k}$ is the spectral radius of an adjacency matrix~$\Omega$ 
of a finite bipartite graph~$\Gamma$ with simple edges. We now use Lemma~\ref{Thurston_lemma} 
for the matrix product 
\[ \begin{pmatrix} 1 & \lambda^k + \lambda^{-k} \\ 0 & 1 \end{pmatrix} 
\begin{pmatrix} 1 & 0 \\ -(\lambda^k + \lambda^{-k}) & 1\end{pmatrix} 
= \begin{pmatrix} 1-(\lambda^k + \lambda^{-k})^2 & \lambda^k + \lambda^{-k} \\ -(\lambda^k + \lambda^{-k}) & 1\end{pmatrix},\]
the trace of which equals~$2-(\lambda^k+\lambda^{-k})^2$. In particular, the eigenvalues must satisfy the 
equation~$-t-t^{-1} = (\lambda^k+\lambda^{-k})^2 -2 = \lambda^{2k}+\lambda^{-2k}$. Hence, the eigenvalue 
with larger modulus is~$-\lambda^{2k}$ and so~$\lambda^{2k}$ is the spectral radius of the matrix product. 
By Lemma~\ref{Thurston_lemma}, the number~$\lambda^{2k}$ is the stretch factor of a pseudo-Anosov
homeomorphism arising from Thurston's construction.
\medskip

\noindent
\emph{(b) implies (a)}: Assume that~$\lambda^k$ is the stretch factor of a pseudo-Anosov homeomorphism 
arising from Thurston's construction. By a result of Hubert and Lanneau, the associated trace 
field~$\mathbf{Q}(\lambda^k+\lambda^{-k})$ is totally real~\cite{HuLa}. \medskip

\noindent
We first consider the case where~$-\lambda^{-1}$ is not a Galois conjugate of~$\lambda$.
From Proposition~\ref{Perrontraces}, 
we know that~$\mathbf{Q}(\lambda+\lambda^{-1})$ equals~$\mathbf{Q}(\lambda^k + \lambda^{-k})$ 
for all positive integers~$k$. Hence, if the field~$\mathbf{Q}(\lambda^k+\lambda^{-k})$ is totally real, 
then obviously so must be~$\mathbf{Q}(\lambda + \lambda^{-1})$, and all Galois conjugates 
of~$\lambda+\lambda^{-1}$ must be real. We note that all Galois conjugates of~$\lambda$ are roots of 
the polynomial~$t^{\mathrm{deg}(p)}p(t+t^{-1})$, where~$p(t)$ is the minimal polynomial of~$\lambda+\lambda^{-1}$. 
In particular, all Galois conjugates~$\lambda_i$ of~$\lambda$ must 
satisfy~$\lambda_i+\lambda_i^{-1}\in\mathbf{R}$ and so~$\lambda_i\in\mathbf{S}^1\cup\mathbf{R}$. \medskip

\noindent
In the case where~$-\lambda^{-1}$ is a Galois conjugate of~$\lambda$, Proposition~\ref{Perrontraces}
shows that one out of~$\mathbf{Q}(\lambda+\lambda^{-1})$ and~$\mathbf{Q}(\lambda^2+\lambda^{-2})$ 
equals~$\mathbf{Q}(\lambda^k + \lambda^{-k})$. In the former case, we are done by the above argument. 
In the latter case, the same argument gives that all Galois conjugates~$\lambda_i^2$ of~$\lambda^2$ 
are contained in~$\mathbf{S}^1\cup\mathbf{R}$. Hence, all Galois conjugates~$\lambda_i$ of~$\lambda$ 
are contained in~$\mathbf{S}^1\cup\mathbf{R} \cup i\mathbf{R}$. We are done by the observation 
that no Galois conjugate of~$\lambda$ can be totally imaginary. Indeed, assume~$\lambda_i$ is such 
a Galois conjugate. Then also~$\overline{\lambda_i}$ is a Galois conjugate of~$\lambda$, 
and we have~$\lambda_i^2 = \overline{\lambda_i^2}$. As an irreducible integer polynomial has no multiple zeroes, 
this implies~$\deg(\lambda^2)<\deg(\lambda)$, a contradiction by Lemma~\ref{Perronpowers}. 
This finishes the proof of Theorem~\ref{bi-Perron_characterisation}. 

\begin{remark}\emph{
Our proof strategy of cyclically showing (a) implies~(c) implies~(b) implies~(a) allows us to single out Lemma~\ref{spectra_relation}
as the only input needed on Coxeter transformations. We note that while we do so fairly implicitly, 
one can explicitly compare bipartite Coxeter transformations 
with the elements of Thurston's construction defined by a product of exactly two multitwists, 
see Section~8 of Leininger~\cite{Leininger}, thus providing a more conceptual proof of~(c) implies~(b). 
}\end{remark}

\subsection{Proof of Theorem~\ref{conjugates_characterisation}}
We prove (a) implies~(b) implies~(a). \medskip

\noindent
\emph{(a) implies (b)}:
Let~$\lambda$ be a Galois conjugate of a bi-Perron number all of whose Galois conjugates are 
contained in~$\mathbf{S}^1\cup\mathbf{R}$. Then~$\lambda+\lambda^{-1}$ is a totally real 
algebraic integer, so by a theorem of Salez~\cite{Salez},~$\lambda+\lambda^{-1}$ is an eigenvalue 
of an adjacency matrix~$\Omega$ of a finite tree~$\Gamma$. By Lemma~\ref{spectra_relation}, 
the eigenvalues~$\rho_i$ of the bipartite Coxeter transformation associated with~$\Gamma$ seen 
as a bipartite Coxeter diagram with simple edges are related to the eigenvalues~$\alpha_i$ of~$\Omega$ by
\[ \alpha_i^2 -2= \rho_i+\rho_i^{-1} .\]
By plugging in~$\lambda + \lambda^{-1}$ for~$\alpha_i$ we see that
\[\lambda^2 + \lambda^{-2} = \rho_i + \rho_i^{-1}.\]
Hence we have~$\rho_i = \lambda^2$, that is,~$\lambda^2$ is an eigenvalue of the Coxeter 
transformation associated with~$\Gamma$. \medskip

\noindent
\emph{(b) implies (a)}: This follows from the result that all the eigenvalues of the Coxeter transformation of 
a tree are contained in~$\mathbf{S}^1\cup\mathbf{R}_{>0}$, due to A'Campo~\cite{A'Campo}. In particular, 
we have that all Galois conjugates of~$\lambda^2$ are contained in~$\mathbf{S}^1\cup\mathbf{R}_{>0}$. 
Now, since~$\lambda$ is a Perron number, if~$\lambda_1,\dots,\lambda_l$ are the Galois conjugates 
of~$\lambda$, then~$\lambda_1^2,\dots,\lambda_l^2$ are the Galois conjugates of~$\lambda^2$ by 
Lemma~\ref{Perronpowers}. Hence, all Galois conjugates of~$\lambda$ lie in~$\mathbf{S}^1\cup\mathbf{R}$.

\end{document}